\renewcommand\@seccntformat[1]{\csname the#1\endcsname.\enspace}
\renewcommand\@begintheorem[2]{\trivlist\item[\hskip\labelsep{\bfseries#1 #2.}]\it}
\renewcommand\@opargbegintheorem[3]{\trivlist\item[\hskip\labelsep{\bfseries#1 #2}] {\bfseries(#3).}\enspace\it\ignorespaces}
\renewenvironment{abstract}{\begin{quote}\hrulefill\par\footnotesize\textbf{\abstractname.}}{\par\vskip-0.5\baselineskip\hrulefill\end{quote}}
\newtheorem{introtheorem}{Theorem}
\newtheorem{thm}{Theorem}[section]
\newtheorem{lemma}[thm]{Lemma}
\newtheorem{proposition}[thm]{Proposition}
\newcommand\mkthm[2]{\newenvironment{#1}{\begin{#2}\rm}{\end{#2}}}
\newenvironment{proof}[1][Proof]{\trivlist\item[\hskip\labelsep{\textit{#1.}}]}{\hspace*{\fill}$\Box$\endtrivlist}
\renewcommand\ge{\geqslant}  
\renewcommand\le{\leqslant}  
\renewcommand\bar{\overline}
\newcommand\grant[1]{{\renewcommand\thefootnote{}\footnotetext{#1.}}}
\newcommand\keywords[1]{{\renewcommand\thefootnote{}\footnotetext{\textit{Keywords:} #1.}}}
\newcommand\subclass[1]{{\renewcommand\thefootnote{}\footnotetext{\textit{Mathematics Subject Classification (2010):} #1.}}}
\newcommand\C{\mathbb C}
\newcommand\Q{\mathbb Q}
\newcommand\R{\mathbb R}
\newcommand\Z{\mathbb Z}
\newcommand\be[1][@{\;}r@{\;}c@{\;}l@{\;}l@{\;}]{$$\everymath{\displaystyle}\renewcommand\arraystretch{1.2}\begin{array}{#1}}
\newcommand\ee{\end{array}$$}
\newcommand\compact{\itemsep=0cm \parskip=0cm}
\newcommand\tfrac[2]{{\textstyle\frac{#1}{#2}}}
\newcommand\set[1]{\left\{#1\right\}}
\newcommand\matr[1]{\left(\begin{array}{*{20}{c}} #1 \end{array}\right)}
\newcommand\into{\hookrightarrow}
\newcommand\unitmatrix{{\mathchoice{\rm 1\mskip-4mu l}{\rm 1\mskip-4mu l}{\rm 1\mskip-4.5mu l}{\rm 1\mskip-5mu l}}}
\newcommand\tmod[1]{\;({\rm mod}~#1)}
\newcommand\tline{\noalign{\vskip0.4ex}\hline\noalign{\vskip0.65ex}}
\newcommand\abs[1]{\left|#1\right|}
\newcommand\Bigabs[1]{\Big|#1\Big|}
\newcommand\liste[3]{\mbox{$#1_{#2},\dots,#1_{#3}$}}
\newenvironment{bycases}{\left\{\begin{array}{@{}l@{\quad}l}}{\end{array}\right.}
\newcommand\eps{\varepsilon}
\newcommand\smallmatr[2]{\bigl({#1 \atop #2}\bigr)}
\newcommand\newop[2]{\newcommand#1{\mathop{\rm #2}\nolimits}}
\newop\End{End}
\newop\Fix{Fix}
\newop\GL{GL}
\newop\SL{SL}
\newop\fix{\#\Fix}  
\newop\tr{tr}
\newop\id{id}
\newcommand\fnum{\widetilde f}  
\newcommand\hilbertsymbol[3]{{#1,\, #2 \overwithdelims() #3}}
\begin{document}

   \title{Fixed points of endomorphisms \\
      on two-dimensional complex tori}
   \author{\normalsize Thomas Bauer, Thorsten Herrig}
   \date{\normalsize \today}
   \maketitle
   \thispagestyle{empty}
   \grant{The second author was supported by Studienstiftung des deutschen Volkes}
   \keywords{abelian variety, endomorphism, fixed point}
   \subclass{14A10, 14K22, 14J50}

\begin{abstract}
   In this paper we investigate fixed-point numbers of
   endomorphisms on
   complex tori.
   Specifically, motivated by the
   asymptotic perspective that has turned out in recent years to be so
   fruitful in
   Algebraic Geometry,
   we study how the number of fixed points
   behaves when the endomorphism is iterated.
   Our first result shows that
   the fixed-points function of an endomorphism on a
   two-dimensional complex torus can have only three different
   kinds of
   behaviours, and we characterize these behaviours in terms of
   the analytic eigenvalues.
   Our second result focuses on simple abelian surfaces and
   provides criteria for the fixed-points behaviour
   in terms of the possible types of endomorphism algebras.
\end{abstract}


\section*{Introduction}

   Given a holomorphic map $f:X\to X$ on a complex variety $X$,
   one of the natural questions about $f$ is how many
   fixed points it has.
   This number may, as expected, vary a lot between
   different endomorphisms, but
   it is a recurring theme in Algebraic Geometry that one
   hopes for much more regularity when
   adopting an asymptotic perspective.
   Examples for the fruitfulness of this approach are
   questions about base loci \cite{ELMMP:base-loci},
   growth of higher cohomology \cite{Fernex-Kuronya-Lazarsfeld:higher},
   syzygies \cite{Ein-Lazarsfeld:asymptotic-syzygies}
   and Betti numbers \cite{Ein-Erman-Lazarsfeld:Betti}.
   Concerning the question of fixed points, a natural asymptotic
   point of view consists in considering large iterates $f^n$
   of a given map. Specifically,
   denoting by $\fix(f)$ the number of fixed points of a map $f$,
   the question becomes:
   \begin{quote}\it
      What is the
      asymptotic
      behaviour of the fixed-points function
      \be
         n\mapsto \fix(f^n)
      \ee
      where $f^n=f\circ\dots\circ f$ denotes
      $n$-th iterate of
      $f$.
   \end{quote}
   The growth
   of the fixed-points function
   is also of interest in
   purely analytic contexts (e.g.~\cite{Shub-Sullivan}).
   In the present paper we consider it when
   $f$ is a holomorphic map on a complex torus.
   As is customary (cf.~\cite{BL:fixed})
   we set $\fix(f)=0$, if the fixed-points set
   is infinite, i.e., if $f$ fixes an analytic subvariety of positive
   dimension.

   Consider for instance the
   multiplication map $m_X:X\to X$, $x\mapsto mx$,
   on a complex torus $X$ of dimension $g$,
   for a given integer $m\ge 2$.
   Its fixed points are the $(m-1)$-torsion points, and hence
   the fixed-points number
   \be
      \fix((m_X)^n) = (m^n-1)^{2g}
   \ee
   grows exponentially with $n$.
   It is natural to wonder whether this is typical for
   endomorphisms on complex tori, and what other behaviour, if any, might
   occur.
   For two-dimensional complex tori we provide
   a complete answer:

\begin{introtheorem}\label{thm:tori}
   Let $X$ be a two-dimensional complex torus
   and let $f:X\to X$ be a non-zero endomorphism. Then
   the fixed-points function $n\mapsto\fix(f^n)$ has one of
   the following three behaviours:
   \begin{itemize}
   \item[\rm(B1)]
      It grows exponentially
      in $n$, i.e., there are real constants $A,B>1$
      and an integer $N$ such that
      for all $n\ge N$,
      \be
         A^n \le \fix(f^n) \le B^n
         \,.
      \ee
      In this case
      both
      eigenvalues of $f$
      (i.e., of its analytic representation
      $\rho_a(f)\in M_2(\C)$)
      are of absolute value $\ne 1$.
   \item[\rm(B2)]
      It is a periodic function.
      In this case
      the non-zero eigenvalues of $f$ are roots of unity,
      and they are contained in the set
      of $k$-th roots of unity where
      $k\in\set{1,\dots,6,8,10,12}$.

   \item[\rm(B3)]
      It is of the form
      \be
         \fix(f^n)=
         \begin{bycases}
           0,    & \mbox{if } n\equiv 0\pmod r \\
           h(n), & \mbox{otherwise}
         \end{bycases}
      \ee
      where $r\ge 2$ is an integer
      and $h$ is an exponentially growing function.
      In this case one of the eigenvalues of $f$ is of absolute
      value $>1$ and the other is a root of unity.
   \end{itemize}
   All three behaviours occur already in the projective case,
   i.e., on abelian surfaces.
\end{introtheorem}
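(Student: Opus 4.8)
The plan is to express $n\mapsto\fix(f^n)$ entirely through the two analytic eigenvalues $\lambda_1,\lambda_2$ of $\rho_a(f)\in M_2(\C)$ and to classify endomorphisms by the absolute values $|\lambda_1|,|\lambda_2|$. First I would record the fixed-point count: since $\Fix(f^n)=\ker(f^n-\id_X)$, this subgroup is finite precisely when $f^n-\id_X$ is an isogeny, i.e.\ when $1$ is not an eigenvalue of $\rho_a(f^n)=\rho_a(f)^n$, equivalently when $\lambda_1^n\ne1\ne\lambda_2^n$; in that case the standard identities $\#\ker g=\deg g=\det\rho_r(g)=\bigl|\det\rho_a(g)\bigr|^2$ for an isogeny $g$ of complex tori give
\[
   \fix(f^n)=\bigl|\det(\rho_a(f)^n-I_2)\bigr|^2=|\lambda_1^n-1|^2\,|\lambda_2^n-1|^2,
\]
and $\fix(f^n)=0$ otherwise; so $\fix(f^n)$ is determined by the sequence $(\lambda_1^n,\lambda_2^n)$. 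A short separate remark handles the degenerate case in which $1$ is an eigenvalue of $\rho_a(f)$: then $\fix(f^n)\equiv0$, which is periodic.

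The arithmetic input is the following. The $\lambda_i$ are algebraic integers, being eigenvalues of the integer matrix $\rho_r(f)$, whose characteristic polynomial equals $q(t)\bar q(t)$ with $q(t)=\det(tI_2-\rho_a(f))$ and $\bar q$ the coefficient-wise complex conjugate; hence $\{\lambda_1,\bar\lambda_1,\lambda_2,\bar\lambda_2\}$ is the root multiset of a monic $P\in\Z[t]$ of degree $\le4$, which factors over $\R[t]$ as $(t^2-a_1t+b_1)(t^2-a_2t+b_2)$ with $a_i=\lambda_i+\bar\lambda_i$ and $b_i=|\lambda_i|^2$. The key lemma I would prove is: \emph{if $|\lambda_i|=1$ then $\lambda_i$ is a root of unity, of some order $m$ with $\varphi(m)\le4$, hence $m\in\{1,\dots,6,8,10,12\}$.} If $|\lambda_1|,|\lambda_2|\le1$ this is immediate from Kronecker's theorem; the delicate situation, which I expect to be the main obstacle, is when the other eigenvalue lies strictly outside the unit circle, so that Kronecker does not apply directly. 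There one uses the special form $P=q\bar q\in\Z[t]$: with $b_1=|\lambda_1|^2\in\Z_{\ge2}$ and $b_2=|\lambda_2|^2=1$, integrality of the coefficients of $P$ forces $a_2(b_1-1)\in\Z$, hence $a_2\in\Q$, so $\lambda_2$ satisfies $t^2-a_2t+1$ over $\Q$; being an algebraic integer on the unit circle it is then $\pm1$ or a primitive $3$rd, $4$th or $6$th root of unity. This lemma is exactly what keeps the number of behaviours at three: it excludes the otherwise conceivable fourth one, namely exponential growth of $\fix(f^n)$ modulated by an oscillating factor coming from an eigenvalue that is an irrational rotation.

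The case analysis then runs according to the size of $\max(|\lambda_1|,|\lambda_2|)$. If $\max(|\lambda_1|,|\lambda_2|)\le1$, each $\lambda_i$ is $0$ or (by the lemma) a root of unity of the stated bounded order, so $(\lambda_1^n,\lambda_2^n)$ is periodic and hence so is $\fix(f^n)$ --- this is (B2), and its eigenvalue assertion is precisely the lemma. If $|\lambda_1|>1$ there are two sub-cases. When $|\lambda_2|\ne1$ one has $\lambda_i^n\ne1$ for all $n$, so $\fix(f^n)=|\lambda_1^n-1|^2|\lambda_2^n-1|^2$, where the first factor grows like $|\lambda_1|^{2n}$ and the second either grows like $|\lambda_2|^{2n}$ or stays between two positive constants; hence $A^n\le\fix(f^n)\le B^n$ for suitable $A,B>1$ and all large $n$, which is (B1). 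When $|\lambda_2|=1$ the lemma makes $\lambda_2$ a primitive $r$-th root of unity with $r\ge2$ (the value $r=1$ being the degenerate case already treated); then $\fix(f^n)=0$ when $r\mid n$, while for $r\nmid n$ one has $\fix(f^n)=|\lambda_1^n-1|^2|\lambda_2^n-1|^2$ with the second factor ranging over the finite set of positive numbers $|\lambda_2^j-1|^2$, $0<j<r$, so the restriction of $\fix(f^n)$ to $r\nmid n$ is an exponentially growing function $h(n)$ --- this is (B3). Reading the three cases backwards yields the eigenvalue characterisations stated in (B1)--(B3).

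It remains to exhibit all three behaviours on abelian surfaces. (B1) is already the multiplication map $m_X$ of the introduction. For (B2) take $X=(\C/\Z[i])^2$ and $f$ the multiplication by $i$, an automorphism of order $4$; then $\fix(f^n)$ is periodic. For (B3) take $X=E\times E'$ with $E'$ admitting an automorphism $\iota$ of order $r\in\{3,4,6\}$ (or $\iota=-\id_{E'}$, giving $r=2$), and put $f=(m_E,\iota)$ with $m\ge2$, so that $\rho_a(f)=\mathrm{diag}(m,\zeta_r)$; since $\Fix(f^n)\supseteq\{0\}\times E'$ precisely when $r\mid n$, the function $\fix(f^n)$ vanishes exactly on the multiples of $r$ and is exponentially growing off them, which is the (B3) pattern. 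The whole argument hinges on the Kronecker-type lemma of the second paragraph, which is where I expect the actual work to lie.
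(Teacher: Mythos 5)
Your proposal follows the same overall strategy as the paper: reduce everything to the analytic eigenvalues via $\fix(f^n)=|(1-\lambda_1^n)(1-\lambda_2^n)|^2$ (you obtain this from $\#\ker g=\det\rho_r(g)=|\det\rho_a(g)|^2$ for isogenies rather than quoting the holomorphic Lefschetz formula of Prop.~\ref{prop:fix-and-eigenvalues} --- equivalent content), prove that a modulus-one eigenvalue is a root of unity of degree at most $4$, and then classify by the sizes of $|\lambda_1|,|\lambda_2|$. Where you genuinely differ is in the hard case of the key lemma, $|\lambda_1|>1$, $|\lambda_2|=1$: the paper (Prop.~\ref{prop:equal-one} with Lemma~\ref{lemma:mipo}) argues via the self-reciprocity of the minimal polynomial of an algebraic integer on the unit circle and excludes the degree-$4$ possibility, whereas you expand $P^r_f=(t^2-a_1t+b_1)(t^2-a_2t+1)$ over $\R$ and extract $a_2(b_1-1)\in\Z$ from the integer coefficients, so that the unit-circle eigenvalue is quadratic over $\Q$ and hence $\pm1$ or a primitive $3$rd, $4$th or $6$th root of unity. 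That computation is correct and arguably more elementary, and it yields the small-degree conclusion in the mixed case directly. Your organization by $\max(|\lambda_1|,|\lambda_2|)$ also lets you bypass the paper's Prop.~\ref{prop:less-than-one} in the growth cases: once one eigenvalue has modulus $>1$, the other factor is either exponentially growing or squeezed between positive constants, which gives (B1) and (B3) without knowing anything further about small eigenvalues. Your projective examples for (B1)--(B3) differ from the paper's but are equally valid.

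There is one real (though easily repaired) gap. In the branch $\max(|\lambda_1|,|\lambda_2|)\le1$ you claim each eigenvalue is $0$ or, \emph{by the lemma}, a root of unity; but your lemma as stated only concerns eigenvalues of modulus exactly $1$, so you have not excluded a nonzero eigenvalue with $0<|\lambda_i|<1$, and excluding it is genuinely needed for periodicity (otherwise $\fix(f^n)$ only converges to a periodic function). This is precisely the content of the paper's Prop.~\ref{prop:less-than-one}, proved there via integrality of $\det\rho_r(f)$ (resp.\ of the relevant coefficient of $P^r_f$ when the other eigenvalue is $0$). The fix is one line with your own tool: such a $\lambda_i$ is a nonzero algebraic integer whose conjugates all lie among the roots of $P^r_f$ and hence have modulus $\le1$, so Kronecker's theorem forces it to be a root of unity, contradicting $|\lambda_i|<1$; you should state this explicitly. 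A final remark on a blemish you share with the paper rather than a flaw of your own: in the degenerate case $\lambda_1=1$, $|\lambda_2|>1$ the function is identically zero, hence periodic, while $\lambda_2$ is not a root of unity; you file this under (B2), where the eigenvalue assertion then fails literally, and the paper's proof would file it under (B3), where the stated form with $r\ge2$ and exponentially growing $h$ fails literally --- the trichotomy of behaviours is unaffected either way.
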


   The fact that the eigenvalues govern the behaviour of the
   fixed-points function is a
   consequence of the Holomorphic
   Lefschetz Fixed-Point Formula (see
   Prop.~\ref{prop:fix-and-eigenvalues}).
   The main point of the theorem is
   that the stated cases are the only ones that can occur.

   For simple abelian surfaces there are
   only three non-trivial types of possible endomorphism algebras,
   and it is desirable to know about the fixed-points behaviour in
   terms of these types. Our second result contains this
   information:

\begin{introtheorem}\label{thm:abelian}
   Let $X$ be a simple abelian surface.
   Then the fixed-points function of any non-zero endomorphism
   $f\in\End(X)$
   is either exponential {\rm(B1)} or periodic {\rm(B2)}, but
   has never behaviour {\rm(B3)}. Specifically, we have:
   \begin{itemize}
   \item[\rm(a)]
      Suppose that $X$ has real multiplication, i.e.,
      $\End_\Q(X)=\Q(\sqrt d)$ for a square-free integer $d>0$.
      Then $\fix(f^n)$ is periodic if
      $f=\pm\id_X$, and it grows exponentially otherwise.
   \item[\rm(b)]
      Suppose that $X$ has indefinite quaternion multiplication,
      i.e., $\End_\Q(X)$ is of the form $\Q+i\Q+j\Q+ij\Q$, where
      $i^2=\alpha\in\Q\setminus\set 0$, $j^2=\beta\in\Q\setminus\set 0$ with
      $ij=-ji$ and $\alpha>0$, $\alpha\ge\beta$. Write $f\in\End(X)$ as
      $f=a+bi+cj+dij$ with $a,b,c,d\in\Q$. Then $\fix(f^n)$ is
      periodic if
      $|a+\sqrt{b^2\alpha+c^2\beta-d^2\alpha\beta}|=1$, and
      it grows exponentially otherwise.
   \item[\rm(c)]
      Suppose that $X$ has complex multiplication, and let
      $\sigma:\End(X)\into\C$ be an embedding.
      Then $f$ has periodic fixed-point
      behaviour if $|\sigma(f)|=1$, and it has
      exponential fixed-points growth otherwise.
   \end{itemize}
\end{introtheorem}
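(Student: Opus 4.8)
The plan is to reduce the entire statement to an analysis of the two eigenvalues $\lambda_1,\lambda_2$ of the analytic representation $\rho_a(f)\in M_2(\C)$. Since $X$ is simple and $f\ne 0$, the map $f$ is an isogeny, so $\rho_a(f)$ is invertible and $\lambda_1,\lambda_2\ne 0$. Combining Theorem~\ref{thm:tori} with the fact that its three behaviours are exhaustive and mutually exclusive yields the clean trichotomy: $\fix(f^n)$ is periodic {\rm(B2)} exactly when $|\lambda_1|=|\lambda_2|=1$; it grows exponentially {\rm(B1)} exactly when $|\lambda_1|\ne 1$ and $|\lambda_2|\ne 1$; and it has behaviour {\rm(B3)} exactly when precisely one of $|\lambda_1|,|\lambda_2|$ equals $1$ (for instance, if both eigenvalues have modulus $1$ then {\rm(B1)} and {\rm(B3)} are ruled out, forcing {\rm(B2)}; the other two equivalences are analogous). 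So all of Theorem~\ref{thm:abelian} — in particular the claim that {\rm(B3)} never occurs — follows once I identify $\lambda_1,\lambda_2$ in each case and check that either both or neither lies on the unit circle, together with the stated criterion for the first alternative.

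Cases (a) and (c) are the easy ones. Under real multiplication $H_1(X,\Q)$ is a two-dimensional $\Q(\sqrt d)$-vector space on which $f$ acts as the scalar $f$, and comparing with $\rho_r(f)\sim\rho_a(f)\oplus\overline{\rho_a(f)}$ shows $\{\lambda_1,\lambda_2\}=\{\sigma_1(f),\sigma_2(f)\}$ for the two (real) embeddings $\sigma_1,\sigma_2$ of $\Q(\sqrt d)$. As these eigenvalues are real, $|\lambda_i|=1$ means $\sigma_i(f)=\pm 1$; injectivity of a field embedding then gives $f=\pm\id_X$, whence $\sigma_{3-i}(f)=\pm 1$ as well. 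So both or neither eigenvalue lies on the unit circle, the former happening exactly for $f=\pm\id_X$ — which, with the trichotomy above, is statement (a). In the CM case the CM-type $\Phi=\{\phi_1,\phi_2\}$ of $X$ gives $\{\lambda_1,\lambda_2\}=\{\phi_1(f),\phi_2(f)\}$, and the four embeddings of the quartic CM field $K=\End_\Q(X)$ are $\phi_1,\phi_2,\bar\phi_1,\bar\phi_2$, with $|\bar\phi_i(f)|=|\phi_i(f)|$. The crucial observation is that $f\bar f$ (for the CM involution $x\mapsto\bar x$) lies in the maximal totally real subfield $K^+$, which is a \emph{field} of degree $2$; since $\phi_1(f\bar f)=|\phi_1(f)|^2$ and $\phi_1|_{K^+}$ is injective, $|\phi_1(f)|=1$ forces $f\bar f=1$ and hence $|\phi_2(f)|^2=\phi_2(f\bar f)=1$ too. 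So $|\sigma(f)|=1$ for one embedding $\sigma$ iff it holds for all of them iff $|\lambda_1|=|\lambda_2|=1$, which is the criterion in (c) and rules out {\rm(B3)}.

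For case (b), write $D=\End_\Q(X)$ — a division quaternion algebra over $\Q$ — put $f=a+bi+cj+dij$ and $\Delta:=b^2\alpha+c^2\beta-d^2\alpha\beta$, so that $\mathrm{Trd}(f)=2a$ and $\mathrm{Nrd}(f)=a^2-\Delta$. Under a splitting $D\otimes_\Q\C\cong M_2(\C)$ the analytic representation becomes the standard two-dimensional representation, so $\lambda_1,\lambda_2$ are the roots $a\pm\sqrt\Delta$ of the reduced characteristic polynomial $t^2-2at+(a^2-\Delta)$. The structural fact that drives the argument is that $D$ is a \emph{division} algebra: a rational factorization $(f-r_1)(f-r_2)=0$ would put $f$ into the center $\Q$, so this polynomial is either irreducible over $\Q$ or equal to $(t-a)^2$, the latter precisely when $f=a\in\Q$, i.e.\ $\Delta=0$. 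I would then split into three subcases. If $\Delta=0$ then $f=a\in\Z$ and $\lambda_1=\lambda_2=a$, so both eigenvalues lie on the unit circle iff $|a|=|a+\sqrt\Delta|=1$. If $\Delta>0$ then $\lambda_1,\lambda_2$ are real and irrational, hence $\ne\pm1$, so $|a+\sqrt\Delta|\ne1$ and we are in {\rm(B1)}. If $\Delta<0$ then $\lambda_1,\lambda_2$ are complex conjugates of common modulus $\sqrt{\mathrm{Nrd}(f)}=|a+\sqrt\Delta|$ — a positive integer under the radical — so both lie on the unit circle iff $|a+\sqrt\Delta|=1$, and otherwise both have modulus $\ne1$. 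In every subcase $|\lambda_1|=1\iff|\lambda_2|=1\iff|a+\sqrt\Delta|=1$, which is the asserted criterion, and {\rm(B3)} is again impossible.

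I expect case (b) to be the main obstacle: one has to extract the eigenvalues of $\rho_a(f)$ correctly through the splitting $D\otimes_\Q\C\cong M_2(\C)$ and then keep careful track of the sign of the discriminant $\Delta$, with the periodic behaviour appearing only through the "definite-direction" subcase $\Delta<0$, where both the division-algebra hypothesis and the integrality of $\mathrm{Nrd}(f)$ are genuinely used. By comparison, the eigenvalue identifications in (a) and (c) are routine, following from the standard shape of the analytic and rational representations and, in (c), from the defining property of the CM involution.
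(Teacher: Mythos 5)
Your proposal is correct, and its overall skeleton (case analysis over the Albert types, reduction to the moduli of the two analytic eigenvalues) matches the paper's; but the key technical steps are genuinely different. First, you run everything through the trichotomy of Theorem~\ref{thm:tori} (``both eigenvalues on the unit circle $\Rightarrow$ (B2), both off $\Rightarrow$ (B1), mixed $\Rightarrow$ impossible''), whereas the paper works directly with the fixed-point formula $\fix(f)=N(1-f)^{4/de}$ from \cite[13.1.4]{BL:CAV} and computes the growth explicitly. Second, in the quaternion case the paper pins down the eigenvalues by proving the polynomial identity $\chi_f(t)^2=P^r_f(t)$ (evaluating the fixed-point formula at integer translates $f-(m-1)$), and excludes the mixed case $|t_1|>1$, $|t_2|=1$ by producing a nonzero element of reduced norm zero; you instead identify $\rho_a(f)$ with the standard representation via $D\otimes_\Q\C\cong M_2(\C)$ (Skolem--Noether / uniqueness of the $2$-dimensional module), and exclude mixed cases through irreducibility of $\chi_f$ over $\Q$ in the division algebra -- both uses of the division hypothesis are essentially equivalent, but your eigenvalue identification bypasses the fixed-point formula altogether. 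Third, in the CM case the paper argues by contradiction with a norm computation $N_{K/\Q}(t-f)=((t-a)^2+b^2e)^2$ to rule out $|\lambda_2|=1$ when $|\sigma(f)|>1$, while your observation that $f\bar f$ lies in the totally real quadratic subfield $K^+$ (using that the CM involution commutes with every embedding) shows at one stroke that $|\tau(f)|=1$ for one embedding iff for all, which both settles the criterion and makes its independence of the chosen embedding $\sigma$ transparent -- arguably cleaner than the paper's subcase analysis. Two small points: you silently invoke standard facts (the $2$-dimensional module of $M_2(\C)$ is the standard one; the CM-type description of $\rho_a$, or at least that each $\lambda_i$ is $\tau(f)$ for some embedding $\tau$) which deserve a citation such as \cite{BL:CAV}; and the blanket assertion that (B3) never occurs also covers the trivial type $\End(X)=\Z$, which your case list omits, though it is handled in one line since then $\lambda_1=\lambda_2=m$.
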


   We give a more detailed description of the three types in
   Sect.~\ref{sect:abelian},
   and we provide a list of the finitely many
   eigenvalues that occur in endomorphisms
   with periodic fixed-points behaviour (see Prop.~\ref{prop:periodic-cases}).

   Fixed points of endomorphisms on complex tori and abelian
   varieties have been studied previously by Birkenhake and Lange
   \cite{BL:fixed} with a focus on
   the classification of
   fixed-point free
   automorphisms.
   The question of fixed point numbers for iterates of
   endomorphisms on abelian varieties was first addressed
   in the preprint \cite{Ringler}; however, Theorem 1.2 in
   \cite{Ringler} is unfortunately erroneous.


\section{Fixed points and eigenvalues}

   Let $X$ be a complex torus of dimension $g$ and let
   $f:X\to X$ be a holomorphic map.
   The map $f$ is
   a translate $f=h+a$
   of a group endomorphism $h\in\End(X)$ by some $a\in X$, and
   we have
   (see \cite{BL:fixed})
   \be
      \fix(f)=\fix(h)
      \,.
   \ee
   So, as far as fixed point numbers are concerned,
   it is enough to consider endomorphisms.
   The following
   proposition allows one to determine fixed point
   numbers from the eigenvalues of the analytic representation.

\begin{proposition}\label{prop:fix-and-eigenvalues}
   Let $f:X\to X$ be an endomorphism of a $g$-dimensional complex
   torus, and let $\liste\lambda1g$ be the eigenvalues of its
   analytic representation (counted with algebraic
   multiplicities).
   Then we have for every integer $n\ge 1$,
   \be
      \fix(f^n)=\Bigabs{\prod_{i=1}^g (1-\lambda_i^n)}^2
      \,.
   \ee
\end{proposition}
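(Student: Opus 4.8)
The plan is to apply the Holomorphic Lefschetz Fixed-Point Formula to the iterate $f^n$, and then to rewrite the resulting expression in terms of the analytic eigenvalues. Recall that for a holomorphic self-map $g$ of a compact complex manifold with only simple (nondegenerate) fixed points, the Lefschetz number
\be[c]
  L(g)=\sum_{q\ge 0}(-1)^q\tr\bigl(g^*\mid H^q(X,\mathcal O_X)\bigr)
\ee
equals $\sum_{x\in\Fix(g)} \bigl(\det(\id - dg_x)\bigr)^{-1}$. First I would recall that for a complex torus $X=V/\Lambda$ of dimension $g$ one has $H^q(X,\mathcal O_X)\cong\bigwedge^q \bar V^{*}$, on which the endomorphism $f$ acts through the conjugate $\bar\rho_a(f)$ of its analytic representation. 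Hence, if $\liste\mu1g$ denote the eigenvalues of $\rho_a(f)$, the eigenvalues of the action on $H^q(X,\mathcal O_X)$ are the products $\bar\mu_{i_1}\cdots\bar\mu_{i_q}$ over $i_1<\dots<i_q$, and the alternating sum over $q$ telescopes into a product:
\be[c]
  L(f)=\sum_{q=0}^{g}(-1)^q\,e_q(\bar\mu_1,\dots,\bar\mu_g)
       =\prod_{i=1}^{g}(1-\bar\mu_i)
       =\overline{\prod_{i=1}^{g}(1-\mu_i)}\,.
\ee
Applying this to $f^n$, whose analytic eigenvalues are $\mu_1^n,\dots,\mu_g^n$, gives $L(f^n)=\overline{\prod_i(1-\mu_i^n)}$, with $\lambda_i=\mu_i$ in the notation of the statement.

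Next I would show that the contribution of each fixed point on the right-hand side of the Lefschetz formula is exactly $1$, so that $L(f^n)$ counts fixed points with multiplicity one each. A fixed point $x$ of $f^n$ is a point where $f^n$, lifted suitably, agrees with a translation; since $f^n$ is an affine map $v\mapsto \rho_a(f^n)v + (\text{const})$, its differential at every point is the \emph{same} $\C$-linear map $\rho_a(f^n)$, whose eigenvalues are $\mu_1^n,\dots,\mu_g^n$. Therefore $\det_\C(\id - df^n_x)=\prod_i(1-\mu_i^n)$, and since a $\C$-linear map of complex dimension $g$ has real determinant equal to $|\det_\C|^2$, the real local index is $|\prod_i(1-\mu_i^n)|^{-2}$ whenever this quantity is nonzero — that is, whenever no $\mu_i^n$ equals $1$. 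Each fixed point contributes $+1/|\prod_i(1-\mu_i^n)|^2$ with the \emph{same} sign and magnitude, so
\be[c]
  \overline{\prod_{i=1}^{g}(1-\mu_i^n)}
     = L(f^n)
     = \frac{\fix(f^n)}{\bigl|\prod_{i=1}^{g}(1-\mu_i^n)\bigr|^2}\,,
\ee
which on multiplying through by $|\prod_i(1-\mu_i^n)|^2 = \prod_i(1-\mu_i^n)\cdot\overline{\prod_i(1-\mu_i^n)}$ yields $\fix(f^n)=\bigl|\prod_i(1-\mu_i^n)\bigr|^2$, as claimed.

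Finally I would dispose of the degenerate case, which I expect to be the main obstacle: if some $\mu_i^n=1$ then the formula predicts $\fix(f^n)=0$, and one must check this agrees with the convention $\fix=0$ for infinite fixed-point sets. When $1$ is an eigenvalue of $\rho_a(f^n)$, the affine map $f^n-\id$ has non-invertible linear part, so its kernel in $V$ — intersected with the (real) translate that makes $f^n$ have a fixed point at all — is either empty or a positive-dimensional affine subspace; pushing to $X=V/\Lambda$, the fixed locus is then either empty or contains a positive-dimensional subtorus-coset, hence infinite. In all these situations $\fix(f^n)=0$ by convention, matching $\bigl|\prod_i(1-\mu_i^n)\bigr|^2=0$. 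The only subtlety is the sign/reality of the local index: one should note that the standard Lefschetz fixed-point formula over $\C$ records each nondegenerate fixed point with index equal to the reciprocal of the \emph{complex} determinant $\det_\C(\id-df_x)$, not its absolute value squared, so that $L(f^n)$ is a priori a complex number; it is precisely the coincidence of this complex number with $\overline{\prod_i(1-\mu_i^n)}$, combined with the elementary identity $z=\bar w/( |w|^{-2})$ giving $z\cdot|w|^2=\bar w\cdot w\cdot\dots$ — cleanly, multiply $L(f^n)\cdot|\det_\C(\id-df^n)|^2$ — that produces the real, manifestly non-negative count $\fix(f^n)$. Assembling these steps completes the proof.
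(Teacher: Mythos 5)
Your overall route---deriving the formula from the general holomorphic (Atiyah--Bott) Lefschetz fixed-point theorem, using $H^q(X,\mathcal{O}_X)\cong\bigwedge^q\bar V^*$ to get $L(f^n)=\prod_i(1-\bar\lambda_i^n)$, observing that $df^n$ is the constant linear map $\rho_a(f)^n$ at every fixed point, and disposing of the degenerate case $\lambda_i^n=1$ via the convention $\fix=0$---is sound, and amounts to proving the torus-specific formula that the paper simply cites from \cite[13.1.2]{LB}. But the central step fails as written: in the holomorphic Lefschetz formula each nondegenerate fixed point contributes $1/\det\nolimits_\C(\id-df^n_x)=1/\prod_i(1-\lambda_i^n)$, a complex number, \emph{not} $1/\bigabs{\prod_i(1-\lambda_i^n)}^2$ where I write $\bigabs{w}$ for $\abs{w}$. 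Your displayed identity
\[
   \overline{\prod_{i}(1-\lambda_i^n)}=L(f^n)=\fix(f^n)\Big/\Bigabs{\prod_{i}(1-\lambda_i^n)}^2
\]
is therefore false in general (its left side is complex, its right side real and nonnegative), and even taken at face value it does not yield the proposition: multiplying through by $\bigl|\prod_i(1-\lambda_i^n)\bigr|^2$ gives $\fix(f^n)=\overline{\prod_i(1-\lambda_i^n)}\cdot\bigl|\prod_i(1-\lambda_i^n)\bigr|^2$, not $\bigl|\prod_i(1-\lambda_i^n)\bigr|^2$. The quantity $\abs{\det_\C}^2$ is the \emph{real} determinant $\det_\R(\id-df^n_x)$, i.e.\ the local index of the \emph{topological} Lefschetz theorem (where, being positive, it contributes sign $+1$); you have conflated the two formulas.

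Your final paragraph shows you noticed the problem, but the attempted repair is again off: multiplying $L(f^n)=\overline{w}$ by $\abs{\det_\C(\id-df^n)}^2=\abs{w}^2$ produces $\overline{w}\,\abs{w}^2$, not $\fix(f^n)$. The correct chain, with $w=\prod_i(1-\lambda_i^n)=\det_\C(\id-\rho_a(f)^n)$, is $\overline{w}=L(f^n)=\fix(f^n)/w$, hence $\fix(f^n)=w\,\overline{w}=\abs{w}^2$. (Alternatively, avoid the holomorphic version altogether: for a holomorphic map every nondegenerate fixed point has topological index $\det_\R(\id-df^n_x)/\abs{\det_\R(\id-df^n_x)}=+1$, so $\fix(f^n)$ equals the topological Lefschetz number $\det_\R(\id-\rho_r(f)^n)=\abs{\det_\C(\id-\rho_a(f)^n)}^2$, which is essentially how the cited formula in \cite{LB} is obtained.) With that one correction your argument goes through; the treatment of the degenerate case, where the fixed locus is positive-dimensional and $\fix(f^n)=0$ by convention, is fine.
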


\begin{proof}
   Thanks to the Holomorphic
   Lefschetz Fixed-Point Formula \cite[13.1.2]{LB},
   the fixed point number
   can be computed from the analytic
   representation $\rho_a(f)\in M_g(\C)$,
   \be
      \fix(f^n)=\abs{\det(\unitmatrix_g-\rho_a(f)^n)}^2
      \,.
   \ee
   As the eigenvalues of $\rho_a(f)^n$ are $\liste{\lambda^n}1g$, we get
   the asserted formula.
\end{proof}

   The proposition shows that the
   fixed-points function
   $n\mapsto\fix(f^n)$
   is governed by the size of the
   eigenvalues.
   If, for instance, it were to happen that all eigenvalues of
   $f$ are
   of absolute value bigger than 1, then clearly $\fix(f^n)$ grows
   exponentially. (This is the case for the multiplication maps
   mentioned in the introduction.)
   The following examples show, however, that eigenvalues of absolute
   value equal to 1 as well as less than 1 occur, too.

\begin{example} (Eigenvalues of absolute value $1$).
\label{example:eigenvalues-one}
   Take an elliptic curve $E$ and consider the complex torus
   $X=E\times E$. The endomorphism
   \be
      f: X\to X, \quad (x, y)\mapsto (x-y, x)
   \ee
   has the eigenvalues $\frac{1+\sqrt{-3}}2$ and
   $\frac{1-\sqrt{-3}}2$. Using
   Prop.~\ref{prop:fix-and-eigenvalues} we find that
   the fixed-points function of $f$ is periodic:
   \be
      \fix(f^n)=
         \begin{bycases}
            0,  & \mbox{ if } n \equiv 0 \pmod 6 \\
            1,  & \mbox{ if } n \equiv 1 \mbox{ or } n \equiv 5 \pmod 6 \\
            9,  & \mbox{ if } n \equiv 2 \mbox{ or } n \equiv 4 \pmod 6 \\
            16, & \mbox{ if } n \equiv 3 \pmod 6\,. \\
         \end{bycases}
   \ee
\end{example}

\begin{example} (Eigenvalues of absolute value $<1$).
\label{example:eigenvalues-less-than-one}
   Examples of this kind have been constructed by
   McMullen \cite[Sect.~4]{McMullen:dynamics} in order to exhibit
   degree 6 Salem numbers (see also \cite{Reschke:Salem}).
   Specifically, McMullen shows that
   for every integer
   $a\ge 0$, the polynomial
   \be
      P(t)=t^4+at^2+t+1
   \ee
   occurs as the characteristic polynomial of
   the rational representation of
   an endomorphism on a
   two-dimensional complex torus.
   One checks that
   the zeros of $P(t)$
   appear in conjugate pairs
   $\alpha,\bar\alpha,\beta,\bar\beta$ with
   $\abs\alpha<\abs\beta$.
   As the constant term of $P$ equals $1$, it follows that
   $\abs\alpha<1$.
\end{example}

   In fact,
   a closer analysis of the preceding example shows that
   arbitrarily small eigenvalues occur:

\begin{proposition}
   For every $\eps>0$ there exists a two-dimensional
   complex torus with an
   endomorphism that has a non-zero eigenvalue of absolute value less
   than $\eps$.
\end{proposition}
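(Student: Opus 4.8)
The plan is to read the statement off McMullen's construction recalled in Example~\ref{example:eigenvalues-less-than-one}, and to show that the small root of the characteristic polynomial appearing there can be pushed arbitrarily close to $0$ by letting the parameter grow. Concretely, for a given integer $a\ge 2$ I would invoke \cite{McMullen:dynamics} to obtain a two-dimensional complex torus $X$ carrying an endomorphism $f$ whose rational representation has characteristic polynomial $P(t)=t^4+at^2+t+1$, and I write $\alpha,\bar\alpha,\beta,\bar\beta$ for its roots, labelled so that $|\alpha|\le|\beta|$. Since the rational representation of an endomorphism is equivalent to $\rho_a(f)\oplus\overline{\rho_a(f)}$, the multiset $\{\alpha,\bar\alpha,\beta,\bar\beta\}$ consists exactly of the eigenvalues of $\rho_a(f)$ together with their complex conjugates; as $|\lambda|=|\bar\lambda|$, it follows that $\rho_a(f)$ has an eigenvalue of absolute value $|\alpha|$, and this eigenvalue is non-zero because $P(0)=1\ne 0$. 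So everything reduces to proving $|\alpha|\to 0$ as $a\to\infty$.

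For that I would simply compare coefficients in $P(t)=(t-\alpha)(t-\bar\alpha)(t-\beta)(t-\bar\beta)$. The constant term gives $|\alpha|^2|\beta|^2=1$, hence $|\alpha|\le 1\le|\beta|$. The vanishing of the $t^3$-coefficient together with the value $a$ of the $t^2$-coefficient gives $\alpha^2+\bar\alpha^2+\beta^2+\bar\beta^2=-2a$, and since $|\alpha^2+\bar\alpha^2|\le 2|\alpha|^2\le 2$ this forces $2|\beta|^2\ge|\beta^2+\bar\beta^2|\ge 2a-2$, so that $|\alpha|^2=|\beta|^{-2}\le 1/(a-1)$. Given $\eps>0$, choosing an integer $a>1+\eps^{-2}$ then yields $|\alpha|<\eps$, and $(X,f)$ is the desired example.

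I do not expect a real obstacle here: the existence of these endomorphisms is handed to us by McMullen's work, and the decay estimate is elementary bookkeeping with the symmetric functions of the four roots. The one point that deserves a moment's care is the bridge from the rational to the analytic representation --- the equivalence $\rho_r\sim\rho_a\oplus\bar\rho_a$ --- which is what guarantees that the small root $\alpha$ (or its conjugate $\bar\alpha$) genuinely occurs as an eigenvalue of the analytic representation, and not merely of the rational one. One could also sidestep this by quoting the last line of Example~\ref{example:eigenvalues-less-than-one}, where $|\alpha|<1$ is already noted, so that only the quantitative refinement $|\alpha|^2\le 1/(a-1)$ would remain to be supplied.
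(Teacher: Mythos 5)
Your proof is correct, and it draws on exactly the same source of examples as the paper, namely McMullen's tori from Example~\ref{example:eigenvalues-less-than-one} with rational characteristic polynomial $P(t)=t^4+at^2+t+1$. The only difference lies in how the smallness of the root is established: the paper applies Rouch\'e's theorem to $P$ to find, for all sufficiently large $a$, a root in the disk of radius $\eps$ about the origin, whereas you compare coefficients, using $|\alpha|^2|\beta|^2=1$ and $\alpha^2+\bar\alpha^2+\beta^2+\bar\beta^2=-2a$ to get the explicit bound $|\alpha|^2\le 1/(a-1)$. Your route is more elementary (no complex analysis) and quantitative, giving an explicit decay rate for the small eigenvalue as $a\to\infty$; the paper's Rouch\'e argument is shorter to state but yields only existence without a rate. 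Your remark on the bridge $\rho_r\sim\rho_a\oplus\bar\rho_a$, ensuring that the small root (or its conjugate) really is an eigenvalue of the \emph{analytic} representation and is non-zero since $P(0)=1$, is a point the paper leaves implicit, and it is handled correctly in your write-up.
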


\begin{proof}
   We draw on the complex tori constructed in
   Example~\ref{example:eigenvalues-less-than-one}.
   An application of
   Rouch\'e's theorem shows that
   for all sufficiently large values of $a$ there exists a root
   of
   the polynomial
   \be
      t^4+at^2+t+1
   \ee
   in the disk of radius $\eps$ around the origin.
   This proves the proposition.
\end{proof}

   The following example shows that eigenvalues of absolute value
   $<1$ occur also in the
   projective case:

\begin{example}
   Let $E$ be an elliptic curve, and consider
   the abelian surface
   $X=E\times E$. Every matrix
   $\smallmatr{a \ b}{c \ d}\in\SL_2(\Z)$
   defines an automorphism $f$ of $X$.
   Its analytic characteristic
   polynomial is $t^2-(a+d)t+1$, and from this one checks that
   $f$
   has an eigenvalue arbitrarily close to 0 when $a+d$ is
   sufficiently large.

   With a little more effort
   the same behaviour can even be found among \emph{simple}
   abelian surfaces: There is a two-dimensional family of
   principally polarized abelian surfaces $X$
   with endomorphism ring
   $\End(X)=\Z[\sqrt 2]$
   (cf.~\cite[Prop.~2.1]{Birkenhake:tensor}). On such a surface
   consider the endomorphism $f=-1+\sqrt 2$. Its analytic
   characteristic polynomial is $t^2+2t-1$, and hence it has
   $-1+\sqrt 2$ as an eigenvalue. Therefore $f^n$ has an eigenvalue
   arbitrarily close to 0 when $n$ is sufficiently large.
\end{example}

   In view of Prop.~\ref{prop:fix-and-eigenvalues},
   and considering the preceding examples, it becomes apparent that
   the issue, in the general case, is
   to understand what kind of eigenvalues
   may occur in endomorphisms of complex tori.
   We focus from now on on the surface case, where we show as
   a first step:

\begin{proposition}\label{prop:less-than-one}
   Let $X$ be a two-dimensional complex torus and $f:X\to X$ an
   endomorphism.
   If $f$ has a non-zero eigenvalue of absolute value $<1$, then it also
   has an eigenvalue of absolute value $>1$.
\end{proposition}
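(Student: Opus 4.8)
The plan is to turn the hypothesis into an integrality constraint on the eigenvalues. Write $\lambda_1,\lambda_2$ for the eigenvalues of $\rho_a(f)$, and recall the standard relation between the analytic and the rational representation: the characteristic polynomial of $\rho_r(f)$ on $H_1(X,\Q)\cong\Q^4$ is
\[
   P(t)=(t-\lambda_1)(t-\bar\lambda_1)(t-\lambda_2)(t-\bar\lambda_2),
\]
and, since $\rho_r(f)$ is given by an integer matrix on the lattice $H_1(X,\Z)$, this $P$ is monic of degree $4$ \emph{with integer coefficients} (see \cite{LB}). In particular its constant term $P(0)=|\lambda_1|^2|\lambda_2|^2$ is a non-negative integer; it equals $\deg f$ when $f$ is an isogeny, and $0$ otherwise.

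The main case is $P(0)\ge 1$. Then $|\lambda_1|^2|\lambda_2|^2\ge 1$, and if $\lambda_1$ denotes the non-zero eigenvalue with $|\lambda_1|<1$, this forces $|\lambda_2|^2\ge 1/|\lambda_1|^2>1$, hence $|\lambda_2|>1$, which is exactly the assertion.

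What remains is to exclude the degenerate possibility $P(0)=0$. In that case one eigenvalue is $0$; since $f$ still has a non-zero eigenvalue $\lambda$ with $0<|\lambda|<1$, the other eigenvalue must be the one that vanishes, so that the four roots of $P$ are $0,0,\lambda,\bar\lambda$ and
\[
   P(t)=t^2\bigl(t^2-(\lambda+\bar\lambda)\,t+|\lambda|^2\bigr).
\]
Reading off the coefficient of $t^2$ shows $|\lambda|^2\in\Z$, contradicting $0<|\lambda|^2<1$. Thus $P(0)=0$ cannot occur; equivalently, an endomorphism of a two-dimensional complex torus having a non-zero eigenvalue of modulus $<1$ is automatically an isogeny, and the previous paragraph concludes.

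I do not expect a serious obstacle here: once one recalls that it is the rational (integral) representation whose characteristic polynomial lies in $\Z[t]$, and that its roots are the $\lambda_i$ together with their complex conjugates, the inequality $|\lambda_1|^2|\lambda_2|^2\ge 1$ is immediate and does all the work. The only point that needs a little attention is the non-isogeny case, which is disposed of, as above, by looking one coefficient beyond the constant term.
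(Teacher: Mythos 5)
Your proof is correct and follows essentially the same route as the paper: both rely on the integrality of the characteristic polynomial of the rational representation, handle the non-degenerate case via the constant term (i.e.\ $\det\rho_r(f)=|\lambda_1|^2|\lambda_2|^2$ being a positive integer), and dispose of the zero-eigenvalue case by observing that $|\lambda|^2$ then appears as an integer coefficient, contradicting $0<|\lambda|<1$.
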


\begin{proof}
   The characteristic polynomial of the
   analytic representation $\rho_a(f)$ is
   \be
      P^a_f(t)=\det(t\unitmatrix_2-\rho_a(f))
         = (t-\lambda_1)(t-\lambda_2)
      \,.
   \ee
   Since the rational representation $\rho_r(f)$ is the direct
   sum of $\rho_a(f)$ and its
   conjugate,
   the
   characteristic polynomial of $\rho_r(f)$
   is given by
   \be
      P^r_f(t)=P^a_f(t)\cdot \bar P^a_f(t)
         =(t-\lambda_1)(t-\lambda_2)
          (t-\bar\lambda_1)(t-\bar\lambda_2)
   \ee
   Assume now by way of contradiction
   that
   $0<\lambda_1<1$ and $\lambda_2\le 1$.

   Consider first the case that $\lambda_2=0$. Then
   $\lambda_1\overline\lambda_1$ appears as a coefficient in
   $P^r_f(t)$, and it must therefore be an integer. So
   $|\lambda_1|\ge 1$, a contradiction.

   Next, suppose $\lambda_2>0$.
   We have
   \be
      \det(\rho_r(f))=\lambda_1\lambda_2\bar\lambda_1\bar\lambda_2
         = \abs{\lambda_1}^2\abs{\lambda_2}^2
      \,.
   \ee
   and this implies
   $0<\abs{\det(\rho_r(f))}<1$, which again is impossible as this
   number is an integer.
\end{proof}

   The next statement will be crucial for dealing with
   the case of eigenvalues of absolute
   value 1.

\begin{proposition}\label{prop:equal-one}
   Let $X$ be a two-dimensional complex torus and $f:X\to X$ an
   endomorphism.
   If $\lambda$ is an eigenvalue of $f$ with $\abs\lambda=1$,
   then $\lambda$ is a root of
   unity.
\end{proposition}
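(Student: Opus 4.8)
The plan is to deduce this from Kronecker's theorem: a nonzero algebraic integer all of whose $\Q$-conjugates have absolute value at most $1$ must be a root of unity. Write $\lambda,\lambda'$ for the two eigenvalues of $\rho_a(f)$ and assume $\abs\lambda=1$. As in the proof of Prop.~\ref{prop:less-than-one}, the characteristic polynomial of the rational representation $\rho_r(f)$ is the \emph{monic integer} polynomial $P^r_f(t)=(t-\lambda)(t-\bar\lambda)(t-\lambda')(t-\bar{\lambda'})$; in particular $\lambda$ is an algebraic integer, and the minimal polynomial $m$ of $\lambda$ over $\Q$ is monic with integer coefficients and divides $P^r_f$. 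Hence the $\Q$-conjugates of $\lambda$, i.e.\ the roots of $m$, all lie in the set $\set{\lambda,\bar\lambda,\lambda',\bar{\lambda'}}$. Since $\abs\lambda=\abs{\bar\lambda}=1$ already, Kronecker's theorem will apply (and finish the proof) as soon as I show: if $\lambda'$ (equivalently $\bar{\lambda'}$, which has the same modulus) happens to be a conjugate of $\lambda$, then $\abs{\lambda'}=1$. Note that with this reduction no case analysis on $\deg m$ or on whether $\lambda$ is real is needed.

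So suppose $\lambda'$ is a $\Q$-conjugate of $\lambda$. Fix an embedding $\bar\Q\into\C$ and let $c$ denote the induced complex conjugation on $\bar\Q$, so $c(\lambda)=\bar\lambda$ and $c(\lambda')=\bar{\lambda'}$; choose $\sigma$ in the absolute Galois group $\mathrm{Gal}(\bar\Q/\Q)$ with $\sigma(\lambda)=\lambda'$. From $\abs\lambda=1$ we have $\lambda\cdot c(\lambda)=\lambda\bar\lambda=1$, and applying $\sigma$ yields $\lambda'\cdot\bigl(\sigma c\sigma^{-1}\bigr)(\lambda')=1$. Set $\tau:=\sigma c\sigma^{-1}\in\mathrm{Gal}(\bar\Q/\Q)$; since $\tau$ fixes $\Q$ it permutes the roots of the rational polynomial $P^r_f$, so $\tau(\lambda')\in\set{\lambda,\bar\lambda,\lambda',\bar{\lambda'}}$. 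Running through the four possibilities and using $\abs\lambda=\abs{\bar\lambda}=1$, the relation $\lambda'\tau(\lambda')=1$ forces $\abs{\lambda'}=1$ in every case: if $\tau(\lambda')\in\set{\lambda,\bar\lambda}$ then $\abs{\lambda'}=1/\abs\lambda=1$; if $\tau(\lambda')=\bar{\lambda'}$ then $\abs{\lambda'}^2=\lambda'\bar{\lambda'}=1$; and if $\tau(\lambda')=\lambda'$ then $\lambda'^2=1$. Therefore every conjugate of $\lambda$ lies on the unit circle, and Kronecker's theorem shows that $\lambda$ is a root of unity.

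The step I expect to require the most care is this twisting argument: $\tau=\sigma c\sigma^{-1}$ is \emph{not} literally complex conjugation and so need not preserve absolute values — what makes it work is only that it preserves the finite, conjugation-stable root set of the rational polynomial $P^r_f$. If one prefers to avoid the absolute Galois group, the same conclusion can be obtained more concretely in the only nontrivial case (namely $\lambda$ non-real and $P^r_f$ irreducible over $\Q$): writing $\mu:=\lambda+\lambda^{-1}=\lambda+\bar\lambda\in\R$ one has $[\Q(\lambda):\Q(\mu)]=2$, hence $[\Q(\mu):\Q]=2$, and $P^r_f$ factors over $\Q$ as $(t^2-\mu t+1)(t^2-\mu' t+1)$ with $\mu'$ the $\Q$-conjugate of $\mu$; the two roots of the second factor multiply to $1$ and are forced to be exactly $\lambda'$ and $\bar{\lambda'}$, so $\abs{\lambda'}^2=\lambda'\bar{\lambda'}=1$ again, while the remaining low-degree cases ($\lambda\in\R$, giving $\lambda=\pm1$; or $\deg m=2$, giving $m(t)=t^2-(\lambda+\bar\lambda)t+1$ with $\lambda+\bar\lambda\in\Z\cap(-2,2)$ and hence $\lambda$ a primitive $3$rd, $4$th or $6$th root of unity) are immediate.
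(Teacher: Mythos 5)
Your proof is correct, and it reaches the same endpoint as the paper (Kronecker's theorem: an algebraic integer all of whose conjugates lie on the unit circle is a root of unity) by a genuinely different route. The paper splits into cases according to whether the second eigenvalue also has modulus $1$: if so, it applies the Kronecker-type statement directly to the monic integer polynomial $P^r_f$; if not, it combines Prop.~\ref{prop:less-than-one} with a separate auxiliary result (Lemma~\ref{lemma:mipo}: the minimal polynomial of an algebraic integer of modulus $1$, $\ne\pm1$, is self-reciprocal of even degree) to force the minimal polynomial of $\lambda$ to have degree $2$, the degree-$4$ alternative being excluded because reciprocal pairs of roots are incompatible with $\abs{\lambda_2}>1$. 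You instead show in one stroke that \emph{every} $\Q$-conjugate of $\lambda$ has modulus $1$, by transporting the relation $\lambda\bar\lambda=1$ with a Galois element $\sigma$ and noting that the twisted conjugation $\tau=\sigma c\sigma^{-1}$ still permutes the conjugation-stable root set of the rational polynomial $P^r_f$; your four-case check is complete and each case indeed forces $\abs{\lambda'}=1$. What your argument buys is uniformity: no appeal to Prop.~\ref{prop:less-than-one}, no reciprocal-polynomial lemma, and no case distinctions on the degree of the minimal polynomial or on whether $\lambda$ is real; the cost is invoking extension of embeddings to $\mathrm{Gal}(\bar\Q/\Q)$, and the delicate point you rightly flag -- that $\tau$ does not preserve absolute values -- is handled correctly, since you only use that it preserves the finite root set. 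One small remark on your optional concrete fallback: to make that sketch exhaustive you should also note that $\deg m=3$ cannot occur (the minimal polynomial is stable under complex conjugation, so a cubic factor of $P^r_f$ would force the remaining root to be rational yet equal to a root of the irreducible cubic); this does not affect the validity of your main Galois-theoretic proof.
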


\begin{proof}
   We may assume that
   $\lambda\ne\pm 1$, as otherwise there is nothing to prove.
   Let $\lambda_1$ and $\lambda_2$ be the eigenvalues of $f$.
   Suppose to begin with that
   $\abs{\lambda_1}=\abs{\lambda_2}=1$.
   The characteristic polynomial
   $P^r_f$ of the rational representation of $f$ is then
   a monic polynomial over the integers,
   all of whose roots are of absolute value
   1. It follows
   that all of its
   roots are then roots of unity
   (see~\cite[Prop.~3.3.9]{Cohen:number-theory}),
   and we are done in this case.

   It remains to consider the case that $\abs{\lambda_1}=1$
   and $\abs{\lambda_2}\ne 1$.
   By Lemma~\ref{prop:less-than-one} we know that then
   necessarily
   $\abs{\lambda_2}> 1$ or $\lambda_2=0$.
   Let $h$ be the minimal polynomial of $\lambda_1$ over $\Q$.
   ($\lambda_1$ is a root of $P^r_f$, and hence an algebraic
   integer.)
   According to
   Lemma~\ref{lemma:mipo} below,
   $h$ is a symmetric integer polynomial of degree 2 or 4,
   whose roots appear in reciprocal pairs.
   So there are two cases:

   \textit{Case 1:} $\deg h=2$. In that case,
   the second root of $h$ is $\bar\lambda_1$.
   So all roots of $h$ are of absolute value 1.
   And as above, we conclude that $\lambda_1$ is a root of unity.

   \textit{Case 2:} $\deg h=4$. Then $h$ coincides with $P^r_f$,
   so its roots are $\lambda_1,\bar\lambda_1,\lambda_2,\bar\lambda_2$.
   But because of $\abs{\lambda_2}>\abs{\lambda_1}=1$, there is no
   way that the roots can occur in reciprocal pairs -- so this case
   does not happen.
\end{proof}

   We very much assume that the following elementary algebraic lemma
   is well-known.
   For lack of a reference we include a proof.

\begin{lemma}\label{lemma:mipo}
   Let $a\in\C$ be an algebraic integer
   of absolute value 1 and different from $\pm 1$.
   Then its minimal polynomial is a polynomial over $\Z$
   of even degree with symmetric coefficients,
   whose roots occur in reciprocal pairs.
\end{lemma}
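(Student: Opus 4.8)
The plan is to start from the field $K=\Q(a)$ and its minimal polynomial $h\in\Q[t]$; since $a$ is an algebraic integer, $h$ is monic with integer coefficients. The key observation is that $a$ lies on the unit circle, so its complex conjugate is $\bar a = a^{-1}$. I will exploit this to show that the set of roots of $h$ is stable under $t\mapsto 1/t$, which forces $h$ to be (up to sign, but monicity pins down the sign) \emph{palindromic}, i.e. $t^{\deg h}h(1/t)=h(t)$; that is exactly the assertion that the coefficients are symmetric and the roots occur in reciprocal pairs.

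First I would record that $h$ is irreducible over $\Q$, so its roots are precisely the Galois conjugates of $a$ over $\Q$. Next, consider the reversed polynomial $h^{*}(t):=t^{d}h(1/t)$ where $d=\deg h$; since $h(0)=\pm 1$ (the constant term of a monic integer irreducible polynomial whose root is a unit — here $a\cdot\bar a = 1$ shows $a$ is a unit, so the constant term is $\pm1$, hence $h^{*}$ is again monic up to sign and $1/a$ is a root of $h^{*}$). Because $1/a=\bar a$ is a Galois conjugate of $a$ (complex conjugation restricted to $K$, or rather to its Galois closure, is an automorphism sending $a$ to $\bar a$), $1/a$ is itself a root of $h$. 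Then $h$ and $\pm h^{*}$ are two monic (after normalizing the leading sign) integer polynomials of the same degree sharing the root $1/a$, and $h$ is the minimal polynomial of $1/a$ as well — so $h=\pm h^{*}$, and comparing leading coefficients gives $h=h^{*}$, i.e. $h$ is palindromic. This immediately yields symmetric coefficients and reciprocal pairing of roots.

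It remains to check that $d=\deg h$ is even. Here I would argue that if $d$ were odd, the palindromic relation $h(t)=t^{d}h(1/t)$ evaluated at $t=-1$ gives $h(-1)=(-1)^{d}h(-1)=-h(-1)$, so $h(-1)=0$, forcing $t+1\mid h$; irreducibility then gives $h(t)=t+1$ and $a=-1$, contradicting $a\ne\pm1$. (One should also note $a\ne 1$ rules out $d=1$ with $h=t-1$, so indeed $d\ge 2$ and $d$ even.)

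The main obstacle — really the only subtle point — is justifying that $1/a=\bar a$ is a Galois conjugate of $a$, i.e. that there is a field automorphism of an appropriate normal extension sending $a\mapsto\bar a$; this is where one uses that complex conjugation is an automorphism of $\C$ fixing $\Q$, so it permutes the roots of $h$, hence $\bar a$ is a root of $h$. That observation, combined with $\bar a = 1/a$, is the crux; the rest is the standard palindromic-polynomial bookkeeping.
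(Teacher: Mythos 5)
Your route is essentially the paper's: show that $\bar a=1/a$ is again a root of the minimal polynomial $h$, compare $h$ with the reversed polynomial $h^{*}(t)=t^{d}h(1/t)$, conclude that $h$ is palindromic, and then deduce evenness of the degree. However, there is a gap at the decisive sign step. From the facts that $h$ is the minimal polynomial of $1/a$, that $h^{*}(1/a)=0$, and that $\deg h^{*}=\deg h$, you only obtain $h^{*}=c\,h$ for some constant $c$. Comparing leading coefficients then gives $c=h(0)=\pm 1$, so \emph{both} possibilities $h^{*}=h$ and $h^{*}=-h$ are consistent with the leading terms: if $h(0)=-1$, then $-h^{*}$ is the monic one and $h=-h^{*}$ matches leading coefficients perfectly. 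Since you have not shown $h(0)=+1$ (that is in fact part of what palindromicity asserts), the claim ``comparing leading coefficients gives $h=h^{*}$'' is circular, and the anti-palindromic alternative $h^{*}=-h$, which would destroy the symmetry of the coefficients, must be excluded by a separate argument.

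The repair is one line and is exactly what the paper does: evaluate $t^{d}h(1/t)=c\,h(t)$ at $t=1$ to get $h(1)=c\,h(1)$; since $a$ lies on the unit circle and $a\ne\pm1$, it is non-real, so $\deg h\ge 2$ and irreducibility gives $h(1)\ne 0$, whence $c=1$. (Equivalently: any anti-palindromic polynomial vanishes at $t=1$, contradicting irreducibility of $h$.) With that step supplied, the rest of your argument --- in particular the evaluation at $t=-1$ ruling out odd degree, which is a slightly more explicit version of the paper's closing remark --- is correct.
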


\begin{proof}
   By definition there is a
   monic polynomial $g$ over $\Z$ with $g(a)=0$.
   As $g$ is a multiple of the minimal polynomial
   $h$ of $a$, it follows from
   Gau\ss' Lemma that $h$ is integral
   as well.

   We now prove the symmetry statement.
   As $a$ is of absolute value 1, we know that
   $1/a=\bar a$ appears as a root of $h$ as well.
   On the other hand, we have
   \be
      h(1/\bar a)=h(a)=0
      \,,
   \ee
   hence $\bar a$ is a root of the integral polynomial
   $t^n h(1/t)$, which is also of degree $n$.
   Therefore
   $t^n h(1/t)=c\cdot h(t)$
   for some $c\in\Q$.
   Setting $t=1$, we get $h(1)=c\cdot h(1)$.
   Since $a$ is irrational, the degree of $h$
   is at least 2, and hence
   $h(1)\ne 0$. We conclude that $c=1$, i.e.,
   \be
      t^n h(1/t)=h(t)
      \,,
   \ee
   and this shows that the roots of $h$ occur in
   reciprocal pairs, and hence that its degree is even.
\end{proof}

   We can now give the

\begin{proof}[Proof of Theorem~\ref{thm:tori}]
   Let $f:X\to X$ be an endomorphism of a two-dimensional complex
   torus, and let $\lambda_1$ and $\lambda_2$ be the eigenvalues of its
   analytic representation (counted with algebraic
   multiplicities). After reordering we assume $\abs{\lambda_1}\le\abs{\lambda_2}$.
   We have then by Prop.~\ref{prop:fix-and-eigenvalues}
   for every integer $n\ge 1$,
   \be
      \fix(f^n)=|(1-\lambda_1^n)(1-\lambda_2^n)|^2
      \,.
   \ee

   Suppose first that $\abs{\lambda_1}>1$. Then by our setup we
   have
   $\abs{\lambda_2}>1$ as well, and hence the fixed-points function
   $\fix(f^n)$ grows exponentially. So we are in Case~(B1)
   of the theorem.

   Suppose next that $0<\abs{\lambda_1}<1$. It follows from
   Prop.~\ref{prop:less-than-one} that then
   $\abs{\lambda_2}>1$, and hence the function $\fix(f^n)$ grows
   exponentially again.

   Suppose now that
   $\abs{\lambda_1}=1$.
   Proposition~\ref{prop:equal-one} tells us that $\lambda_1$ is then a
   root of unity. If $\abs{\lambda_2}=1$, then the same is true
   for $\lambda_2$ and hence $\fix(f^n)$ is a periodic function,
   so we are in Case~(B2) of the theorem.
   And if $\abs{\lambda_2}>1$, then the fixed-points function
   has the form described in Case~(B3) of the theorem.
   In either case, the roots of unity are of algebraic degree
   $\le 4$, since they appear as roots of the rational
   characteristic polynomial $P^r_f(f)$. They are therefore
   $k$-th roots of unity, where
   $k\in\set{1,\dots,6,8,10,12}$.

   Finally, suppose that $\lambda_1=0$. Then
   we have behaviour (B1) if $\abs{\lambda_2}>1$
   and behaviour (B2) if $\abs{\lambda_2}=1$.

   It remains to show that all three behaviours actually occur.
   Case~(B1) happens for the multiplication map $x\mapsto mx$
   on every complex
   torus, as soon as $\abs m\ge 2$.
   Example~\ref{example:eigenvalues-one} is an instance of
   (B2), and
   Example~\ref{example:eigenvalues-one-root} below
   shows that (B3) occurs.
   In all three cases there are projective examples.
\end{proof}

\begin{example} (One eigenvalue of absolute value $>1$, the other a root of unity.)
\label{example:eigenvalues-one-root}
   Consider the elliptic curve $E$ with complex multiplication
   in $\Z[i]$, and take the abelian surface $X=E\times E$.
   The endomorphism
   \be
      X\to X, \quad (x,y)\mapsto(ix, 2iy)
   \ee
   has the eigenvalues $i$ and $2i$, and hence the fixed-points function
   has the behaviour described in Case~(B3) of
   Theorem~\ref{thm:tori}:
   \be
      \fix(f^n)=
         \begin{bycases}
            0,   & \mbox{ if } n \equiv 0 \pmod 4 \\
            h(x) & \mbox{ otherwise }
         \end{bycases}
   \ee
   where the function $h$ grows exponentially, $h(n)\sim 2^{2n}$.
\end{example}


\section{Fixed points on simple abelian surfaces}\label{sect:abelian}

   In this section we will explicitly determine the endomorphisms
   on simple abelian surfaces whose fixed-points function grows
   exponentially, thus proving Theorem~\ref{thm:abelian}
   stated in the introduction.

   We will make use of the fixed-point formula for abelian varieties
   \cite[13.1.4]{BL:CAV}, which for
   an endomorphism $f$ of
   a simple abelian surface $X$
   takes the following form: Let $D=\End_\Q(X)$, $K=\mbox{center}(D)$,
   $e=[K:\Q]$, $d^2=[D:K]$, and let $N:D\to\Q$ be the reduced norm map.
   Then for $f\in\End(X)$,
   \be
      \fix(f)=\Big(N(1-f)\Big)^{\frac{4}{de}}
      \,.
   \ee
   (The formula expresses the fact that the characteristic
   polynomial of the rational representation $\rho_r(f)$
   coincides with the map $N^{{4}/{de}}$.)
   Since on a simple abelian surface every non-zero
   endomorphism $f$ is an isogeny, both eigenvalues of $f$
   are non-zero.

   We employ now a strategy as in \cite{BL:fixed}, i.e., we use
   Albert's classification and deal with the possible types
   of simple abelian surfaces separately.


\paragraph{Type 0: Integer multiplication.}
   Suppose that $\End(X)=\Z$. In that case every endomorphism $f$
   is a multiplication map $x\mapsto mx$ for some $m\in\Z$. So $f$
   has exponential fixed-points growth if and only if
   $\abs{m}>1$.


\paragraph{Type 1: Real multiplication.}
   Suppose that $X$ has real multiplication, i.e., that
   $\End_\Q(X)=\Q(\sqrt d)$ for some square-free integer $d>0$.
   Every endomorphism $f\in\End(X)$ is then of the form
   $f=a+b\omega$ with $a,b\in\Z$, where
   \be
      \omega=
      \begin{bycases}
         \sqrt d             & \mbox{ if } d\equiv 2,3 \tmod 4 \\
         \tfrac12(1+\sqrt d) & \mbox{ if } d\equiv 1 \tmod 4
      \end{bycases}
   \ee
   With respect to suitable coordinates on $\C^2$,
   the analytic representation $\rho_a:\End_\Q(X)=\Q(\sqrt d)\to M_2(\C)$ is given
   by
   \be
      1 \mapsto \unitmatrix_2 \quad\mbox{and}\quad
      \sqrt d \mapsto \matr{\sqrt d & 0 \\ 0 & -\sqrt d}
   \ee
   (see \cite{Ruppert:abelian}).
   So the eigenvalues of $\rho_a(f)=\rho_a(a+b\omega)$
   are
   \begin{itemize}\compact
   \item
      $a\pm b\sqrt d$, if $d\equiv 2,3 \tmod 4$,
   \item
      $a+b\tfrac12(1\pm\sqrt d)$, if $d\equiv 1 \tmod 4$
   \end{itemize}
   If $b=0$, then $f$ is multiplication by $a$, and hence it
   has exponential fixed-points growth if and only if $\abs a>1$.
   And if $b\ne 0$, then both eigenvalues are of absolute value
   $\ne 1$. Using Prop.~\ref{prop:less-than-one} we see that then
   $f$ has exponential fixed-points growth.


\paragraph{Type 2: Indefinite quaternion multiplication.}

   Suppose that $X$ has indefinite quaternion multiplication,
   i.e., there are $\alpha,\beta\in\Q\setminus\set 0$ with
   $\alpha\ge\beta$ and $\alpha>0$ such that
   $\End_\Q(X)$ is isomorphic to the quaternion algebra
   $\hilbertsymbol{\alpha}{\beta}{\Q}$, i.e.,
   $\End_\Q(X)=\Q+i\Q+j\Q+ij\Q$, where $i$ and $j$ satisfy the
   relations $i^2=\alpha$, $j^2=\beta$ and $ij=-ji$. Using the
   splitting field $\Q(\sqrt \alpha)$ of $\End_\Q(X)$, one has
   an isomorphism
   \be
      \psi:\End_\Q(X)\otimes_\Q\Q(\sqrt \alpha) \rightarrow M_2(\Q(\sqrt \alpha))
   \ee
   given by
   \be
      i\otimes1\mapsto\matr{\sqrt \alpha & 0 \\ 0 & -\sqrt \alpha} \quad
      \quad\mbox{and}\quad
      \quad j\otimes1\mapsto \matr{ 0 & \beta \\ 1 & 0}
      \,.
   \ee
   For an element $f\in\End(X)$, written as
   $f=a+bi+cj+dij$ with $a,b,c,d\in\Q$, we have
   \be
      \psi(f)=\matr{ a+b\sqrt \alpha & c\beta+d\beta\sqrt \alpha \\ c-d\sqrt \alpha & a-b\sqrt \alpha}
      \,.
   \ee
   The reduced norm of $f$ over $\Q$ is given by
   $N(f)=\det(\psi(f))=a^2-b^2\alpha-c^2\beta + d^2\alpha\beta$.
   After diagonalization of $\psi(f)$ the norm of
   $1-f^n$ can be written as
   \be
      \lefteqn{N(1-f^n)} & \\
                         & =  & \det\matr{1-(a+\sqrt{b^2\alpha+c^2\beta-d^2\alpha\beta})^n & 0 \\ 0 & 1-(a-\sqrt{b^2\alpha+c^2\beta-d^2\alpha\beta})^n} \\
                         & =  & (1-t_1^n)(1-t_2^n)
      \,,
   \ee
   where $t_i=a\pm\sqrt{b^2\alpha+c^2\beta-d^2\alpha\beta}$.
   So we have $\fix(f^n)=((1-t_1^n)(1-t_2^n))^2$.
   Consider now the \emph{reduced characteristic polynomial}
   of $f$,
   \be
      \chi_f(t)=t^2-\tr(f)t+N(f)=t^2-\tr(\psi(f))t+\det(\psi(f))
      \,.
   \ee
   Since $f$ is contained in an order, it is an integral element,
   and hence $\chi_f$ has integer coefficients.
   For every integer $m$ we have
   \be
      \chi_f(m)^2 & = & (m-t_1)^2(m-t_2)^2=N(m-f)^2 \\
                  & = & N(1-(f-(m-1)))^2=\fix(f-(m-1)) \\
                  & = & \det(\unitmatrix_4-\rho_r(f-(m-1)))=\det(\unitmatrix_4-\rho_r(f)+(m-1)\unitmatrix_4) \\
                  & = & \det(m\unitmatrix_4-\rho_r(f))=P_f^r(m)
   \ee
   and this implies that $\chi_f(t)^2=P^r_f(t)$ as polynomials in $t$.
   Therefore, if we denote by $\lambda_1$ and $\lambda_2$ the analytic eigenvalues
   of $f$, then $t_1\in\set{\lambda_1,\overline{\lambda}_1}$
   and $t_2\in\set{\lambda_2,\overline{\lambda}_2}$.

   \begin{itemize}
    \item
      If $|t_1|>1$ and $|t_2|>1$, then
      $\fix(f^n)$ grows asymptotically with $|t_1t_2|^{2n}$.
   \item
      If $|t_1|<1$, then
      Prop.~\ref{prop:less-than-one}
      tells us
      that $\abs{\lambda_2}>1$.
      Therefore the number of
      fixed-points grows exponentially in this case.
   \item
      If $|t_1|=|t_2|=1$, then the possible real
      values for $t_1$ and $t_2$ are $\pm 1$. If $t_1$ and
      $t_2$ are complex, then they are roots of unity,
      because they are roots of the integral polynomial $\chi_f$.
      The function $n\mapsto \fix(f^n)$ is then periodic.
   \item
      We show now that the
      case $|t_1|>1$ and $|t_2|=1$ does not occur.
      Assume the contrary.
      Since $t_1$ and $t_2$ are the roots of a rational polynomial of
      degree $2$, we know that $t_2$ has to be $\pm 1$. As
      $t_1\neq t_2$, at least one of the coefficients $b,c,d$ is
      non-zero
      (since otherwise $a=t_1=t_2$).
      But then
      $t_2=\pm 1$ implies that
      $f\mp 1$ is a non-zero endomorphism whose norm is zero,
      and hence
      the
      quaternion algebra cannot be a division algebra,
      a contradiction.
   \end{itemize}


\paragraph{Type 3: Complex multiplication.}

   Suppose that $X$ has complex multiplication, i.e., that
   $\End_\Q(X)$ is isomorphic to
   an imaginary quadratic extension $K$ of a real
   quadratic number field $\Q(\sqrt d)$, where $d$ is a positive
   square-free integer.

   Consider an endomorphism
   $f\in\End(X)$
   and let $\lambda_1$ and $\lambda_2$ be the
   eigenvalues of its analytic representation $\rho_a(f)$.
   By the Cayley-Hamilton
   theorem, $f$ is
   annihilated by its rational characteristic polynomial
   $P^r_f$. As the isomorphism $\sigma:\End_\Q(X)\to K$ fixes $\Q$,
   the complex number $\fnum=\sigma(f)$ is then
   a zero of $P^r_f$
   as well. This implies that it is contained in the set
   $\set{\lambda_1,\lambda_2,\overline\lambda_1,\overline\lambda_2}$.
   So, after renumbering, we have $\fnum=\lambda_1$ or $\fnum=\overline\lambda_1$.
   We distinguish now cases according to $\fnum$:

   \begin{itemize}
   \item
      If $|\fnum|<1$, then we know by
      Prop.~\ref{prop:less-than-one} that $|\lambda_2|>1$.
      Therefore in this case the number of
      fixed points grows exponentially, which is behaviour (B1).

   \item
      If $|\fnum|=1$, then  by
      Prop.~\ref{prop:equal-one}
      we know that $\lambda_1$, and hence $\fnum$, is a root of unity.
      The fixed-points function
      $n\mapsto\fix(f^n)=N(1-f^n)$
      is then periodic (B2).

   \item
      Let $|\fnum|>1$ and $\fnum\in\R$. Then $\fnum$ is of the form
      $a+b\sqrt d$, hence we have behaviour (B1) as in the case
      of real multiplication.

   \item
      Let $|\fnum|>1$ and $\fnum\notin\R$.
      So $|\lambda_1|>1$.
      Our aim is to show that then $|\lambda_2|\ne 1$,
      which implies behaviour (B1).
      Assuming by way of contradiction that $|\lambda_2|=1$,
      note first that the proof of
      Prop.~\ref{prop:equal-one} shows that
      the minimal polynomial of $\lambda_1$ must be
      of degree 2. Therefore $\lambda_1$ can be written as
      $a+b\sqrt{-e}$ with $a,b\in\Q$ and $e$ a square-free positive
      integer. For $t\in\Z$ we compute the norm of $t-f$,
      \be
         N_{K/\Q}(t-f) & = & N_{\Q(\sqrt{-e})/\Q}(N_{K/\Q(\sqrt{-e})}(t-f)) \\
                       & = & N_{\Q(\sqrt{-e})/\Q}((t-f)^2) \\
                       & = & ((t-a)^2 + b^2e )^2
         \,.
      \ee
      On the other hand, $N_{K/\Q}(t-f)$ coincides as a polynomial in $t$
      with $P^r_f(t)$, and therefore
      $\lambda_2$ equals $\fnum$ or its conjugate.
      It cannot be of absolute value 1 then, and
      so we arrive at a contradiction.
   \end{itemize}

   We provide a complete list of the possible eigenvalues
   that occur for endomorphisms with periodic fixed-points
   behaviour in
   quaternion and complex multiplication.

\begin{proposition}\label{prop:periodic-cases}
\begin{itemize}
\item[\rm a)]
   If $X$ is a simple abelian surface with quaternion
   multiplication
   and $f$ a non-zero endomorphism on $X$ with periodic fixed-points
   behaviour, then all eigenvalues of $f$ are roots of unity of
   algebraic degree $\le 2$, i.e., they are contained in the following
   set:
   \be
   \begin{tabular}{ll}
      \tline
      in degree 1: & $\pm 1$ \\
      in degree 2: & $\textstyle \pm i, \quad \pm\frac{1}{2}\pm\frac{\sqrt{-3}}{2}$ \\
      \tline
   \end{tabular}
   \ee
   Conversely, each of these numbers
   occurs as an eigenvalue of an endomorphism
   on some simple abelian surface with quaternion
   multiplication.

\item[\rm b)]
   If $X$ is a simple abelian surface with complex
   multiplication
   and $f$ a non-zero endomorphism on $X$ with periodic fixed-points
   behaviour, then all eigenvalues of $f$ are roots of unity
   of algebraic degree $\le 4$, i.e., they are contained in the
   following set:
   \be
   \begin{tabular}{ll}
      \tline
      in degree 1: & $\pm 1$ \\
      in degree 2: & $\textstyle \pm i, \quad \pm\frac{1}{2}\pm\frac{\sqrt{-3}}{2}$ \\
      in degree 4: & $\textstyle \pm\frac{\sqrt 2}{2}\pm\frac{\sqrt{-2}}{2}, \quad \textstyle \pm\frac{\sqrt 3}{2}\pm\frac{\sqrt{-1}}{2},$ \\
                   & $\textstyle \pm(\frac{1}{4}+\frac{\sqrt 5}{4})\pm i\sqrt{\frac{5}{8}-\frac{\sqrt 5}{8}}, \quad \pm(\frac{1}{4}-\frac{\sqrt 5}{4})\pm i\sqrt{\frac{5}{8}+\frac{\sqrt 5}{8}}$ \\
      \tline
   \end{tabular}
   \ee
   Conversely, each of these numbers
   occurs as an eigenvalue of an endomorphism
   on some simple abelian surface with complex
   multiplication.
\end{itemize}
\end{proposition}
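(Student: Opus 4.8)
My plan is to handle the two halves of the statement separately: the forward implication, that periodicity forces the eigenvalues into the listed sets, and the converse, that each listed value is actually attained.

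For the forward implication I would begin exactly as in the proof of Theorem~\ref{thm:tori}: an endomorphism $f$ has periodic fixed-points behaviour precisely when both analytic eigenvalues $\lambda_1,\lambda_2$ have absolute value $1$, and then Proposition~\ref{prop:equal-one} makes them roots of unity, so it only remains to bound their degree over $\Q$. In case (a) this is immediate from the identity $\chi_f(t)^2=P^r_f(t)$ already established in the treatment of Type~2: the $\lambda_i$ are roots of the monic integer polynomial $\chi_f$, of degree $\le 2$, hence roots of unity of degree $\le 2$. In case (b) the $\lambda_i$ are roots of $P^r_f$, of degree $4$, hence roots of unity of degree $\le 4$. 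Since a root of unity of degree $\le 2$ is a primitive $k$-th root with $k\in\{1,2,3,4,6\}$ and one of degree $\le 4$ a primitive $k$-th root with $k\in\{1,2,3,4,5,6,8,10,12\}$, writing each such root in radical form ($\pm i$ for $k=4$, $\pm\tfrac12\pm\tfrac{\sqrt{-3}}2$ for $k\in\{3,6\}$, $\pm\tfrac{\sqrt2}2\pm\tfrac{\sqrt{-2}}2$ for $k=8$, $\pm\tfrac{\sqrt3}2\pm\tfrac{\sqrt{-1}}2$ for $k=12$, and the two quartic families for $k\in\{5,10\}$) reproduces the two tables.

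For the converse, $\pm1$ is trivial, realised by $f=\pm\id_X$ on any simple abelian surface of the relevant type. In case (a), a degree-$2$ root of unity $\zeta$ has reduced characteristic polynomial $t^2+1$, $t^2+t+1$ or $t^2-t+1$, so an endomorphism with $\zeta$ as an analytic eigenvalue exists as soon as $\Q(\zeta)$ embeds into $D=\End_\Q(X)$; I would take for $D$ the indefinite quaternion division algebra over $\Q$ ramified exactly at $\{2,3\}$, since neither prime splits in $\Q(i)$ or in $\Q(\sqrt{-3})$, so both fields embed, and then a QM abelian surface $X$ with $\End(X)$ a maximal order of $D$ realises all of $\pm i$ and $\pm\tfrac12\pm\tfrac{\sqrt{-3}}2$. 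In case (b), a degree-$4$ root of unity $\zeta$ generates a quartic cyclotomic field, namely $\Q(\zeta_5)=\Q(\zeta_{10})$, $\Q(\zeta_8)$ or $\Q(\zeta_{12})$, which is a quartic CM field; an abelian surface with CM by its ring of integers relative to a primitive CM type is simple, and $f=\zeta$ together with its powers then supplies the primitive $5$th, $8$th, $10$th and $12$th roots among the analytic eigenvalues, with the degree-$2$ values recovered from the quadratic subfields of $\Q(\zeta_8)$ and $\Q(\zeta_{12})$.

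I expect the real content, and the main obstacle, to lie entirely on the converse side: one must ensure the abelian surfaces produced are genuinely \emph{simple}. For (a) this amounts to the local computation that $\Q(i)$ and $\Q(\sqrt{-3})$ split at no ramified prime of the chosen division algebra, together with the standard simplicity of QM abelian surfaces with maximal order. For (b) it reduces to checking that the relevant quartic CM field admits a primitive CM type, which is the point that most needs care: not every quartic CM field does, and a biquadratic one does not, so realising the $8$th- and $12$th-root values, and likewise the degree-$2$ values, may force one to replace the naive cyclotomic field by a more carefully chosen quartic CM field, or, for the degree-$2$ eigenvalues, to work instead with a simple abelian surface whose endomorphism algebra is the imaginary quadratic field $\Q(\zeta)$ itself. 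Everything else, namely evaluating $\chi_f$, reading eigenvalues off embeddings, and checking the radical expressions, is routine.
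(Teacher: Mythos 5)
Your forward direction and your converse in part (a) are essentially the paper's own argument: periodicity forces both analytic eigenvalues to have absolute value $1$, hence to be roots of unity, and they are roots of $\chi_f$ (degree $\le 2$) in the quaternion case and of $P^r_f$ (degree $\le 4$) in the CM case; for the converse in (a) the paper likewise invokes Shimura's theory (\cite{BL:CAV}, \S 9.4) after exhibiting a division quaternion algebra and an order containing an element with the prescribed reduced characteristic polynomial -- it uses the two algebras $\hilbertsymbol{3}{2}{\Q}$ and $\hilbertsymbol{-3}{2}{\Q}$ with explicit orders and elements, while you use a single algebra ramified exactly at $2$ and $3$; that difference is inessential, though you should still write down the embeddings of $\Z[i]$ and $\Z[\tfrac{1+\sqrt{-3}}2]$ into an order as the paper does.

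The genuine gap is in the converse of (b), exactly where you located the difficulty, because neither of your proposed repairs can work. There is no ``more carefully chosen quartic CM field'' for the $8$th and $12$th roots of unity: any quartic field containing a primitive $8$th (resp.\ $12$th) root of unity \emph{is} $\Q(\zeta_8)$ (resp.\ $\Q(\zeta_{12})$), and both are biquadratic, hence admit no primitive CM type, so no simple abelian surface has CM by them. Moreover, since for Type 3 one has $P^r_f(t)=N_{K/\Q}(t-f)$, every analytic eigenvalue of $f\in\End(X)\subset K$ equals $\sigma(f)$ for some embedding $\sigma\colon K\to\C$; thus an eigenvalue that is a primitive $8$th or $12$th root of unity forces $K\cong\Q(\zeta_8)$ or $\Q(\zeta_{12})$, and an eigenvalue that is a root of unity of degree $2$ forces $K$ to contain an imaginary quadratic subfield and hence to be biquadratic -- in every case incompatible with $X$ simple. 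Your fallback for the degree-$2$ values, a simple abelian surface with $\End_\Q(X)=\Q(\zeta)$ imaginary quadratic, is not available either: it leaves the setting of the proposition (complex multiplication here means a quartic CM field), and an imaginary quadratic field does not occur as the endomorphism algebra of a simple abelian surface at all (see the classification recalled at the start of Sect.~\ref{sect:abelian}). So your plan cannot be completed for these entries; only $\pm 1$ and the primitive $5$th and $10$th roots of unity (realised via the cyclic field $\Q(\zeta_5)$, all of whose CM types are primitive) are obtainable by your route. Be aware that the paper's own proof of this part -- taking $L(\zeta)$ for a real quadratic $L$, or citing \cite{BL:CAV}, \S 9.6, for an arbitrary quartic root of unity -- glosses over precisely the primitivity issue you raised, so your concern is a substantive objection to that argument rather than a defect you could have avoided by following the paper more closely.
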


\begin{proof}
   One direction is clear by now:
   If $f$ has periodic fixed-points behaviour, then we know that
   all eigenvalues are roots of unity.
   In the quaternion case they are roots of $\chi_f$ and therefore
   of degree $\le 2$, and in the complex multiplication case they
   are roots of $P^r_f$ and therefore of degree $\le 4$.

   As for the converse statement in (a): The assertion being
   obvious for $\pm 1$, we now exhibit
   quaternion
   algebras $B_1$ and $B_2$ that are skew-fields,
   and
   orders $\mathcal
   O_1\subset B_1$ and
   $\mathcal O_2\subset B_2$
   containing
   elements that lead to the
   required
   eigenvalues $\pm i$ and $\pm\frac12\pm\frac{\sqrt{-3}}2$
   respectively.
   By Shimura's theory (cf.~\cite[\S9.4]{BL:CAV}), each of
   the orders $\mathcal O_k$ is contained in the endomorphism ring
   of some simple abelian surface.

   To this end, consider first
   the quaternion algebra
   $B_1=\hilbertsymbol{3}{2}{\Q}$.
   It
   is a skew field and contains the order
   $\mathcal O_1=\mathbb{Z}+\mathbb{Z}i+\mathbb{Z}j+\mathbb{Z}ij$.
   The element
   $f_1=i+j+ij$
   has reduced characteristic polynomial
   $\chi_{f_1}(t)=t^2+1$, whose roots are $\pm \sqrt{-1}$.

   Secondly, consider
   $B_2=\hilbertsymbol{-3}{2}{\Q}$ and
   the splitting field $L=\Q(i)$, where $i^2=-3$.
   The maximal order in $L$ is $S=\Z+\frac{1+i}2\Z$, and it
   follows from this that $S+jS$, with $j^2=2$, is an order in $B_2$.
   The element $f_2=\frac{1+i}{2}\in S$ hat reduced characteristic
   polynomial
   $\chi_{f_2}(t)=t^2-t+1$, whose roots are
   $\frac12\pm\frac12\sqrt{-3}$.
   And the element $f_3=\frac{-1+i}{2}\in S$ has reduced characteristic
   polynomial
   $\chi_{f_3}(t)=t^2+t+1$, whose roots are
   $-\frac12\pm\frac12\sqrt{-3}$.

   As for the converse statement in (b): Every root of unity
   $\zeta$ of
   algebraic degree 4 is an algebraic integer in the CM field
   $\Q(\zeta)$, and therefore
   there exists a simple abelian surface $X$
   of CM-type, where the root represents an endomorphism $f$
   (cf.~\cite[\S9.6]{BL:CAV}).
   For the roots of unity of degree 2
   just consider any real
   quadratic number field $L$ and take the CM field
   $L(\zeta)$.
\end{proof}




\footnotesize
   \bigskip
   Thomas Bauer,
   Fachbereich Mathematik und Informatik,
   Philipps-Universit\"at Marburg,
   Hans-Meerwein-Stra\ss e,
   D-35032 Marburg, Germany.

   \nopagebreak
   \textit{E-mail address:} \texttt{tbauer@mathematik.uni-marburg.de}

   \bigskip
   Thorsten Herrig,
   Fachbereich Mathematik und Informatik,
   Philipps-Universit\"at Marburg,
   Hans-Meerwein-Stra\ss e,
   D-35032 Marburg, Germany.

   \nopagebreak
   \textit{E-mail address:} \texttt{herrig106@mathematik.uni-marburg.de}



\begin{thebibliography}{ELMNP}\footnotesize\compact

\bibitem[Bir]{Birkenhake:tensor}
   Birkenhake, Ch.:
   Tensor products of ample line bundles on abelian varieties.
   Manuscripta math. 84 (1994), 21-28

\bibitem[BL]{BL:fixed}
   Birkenhake, C., Lange, H.:
   Fixed-point free automorphisms of abelian varieties.
   Geom. Dedicata 51, No.3, 201--213 (1994)

\bibitem[CAV]{BL:CAV}
   Birkenhake, C., Lange, H.:
   Complex abelian varieties.
   Springer, 2004.

\bibitem[Coh]{Cohen:number-theory}
   Cohen, H.:
   Number Theory. Volume I: Tools and Diophantine Equations.
   Springer, 2007.

\bibitem[DKL]{Fernex-Kuronya-Lazarsfeld:higher}
   De Fernex, T., K\"uronya, A. , Lazarsfeld, R.:
   Higher cohomology of divisors on a projective variety.
   Math. Ann. 337, No. 2, 443-455 (2007)

\bibitem[EEL]{Ein-Erman-Lazarsfeld:Betti}
   Ein, L., Erman, D., Lazarsfeld, R.:
   Asymptotics of random Betti tables.
   J. Reine Angew. Math. 702, 55-75 (2015)

\bibitem[EL]{Ein-Lazarsfeld:asymptotic-syzygies}
   Ein, L., Lazarsfeld, R.:
   Asymptotic syzygies of algebraic varieties.
   Invent. Math. 190, No. 3, 603-646 (2012)

\bibitem[ELMNP]{ELMMP:base-loci}
   Ein, L., Lazarsfeld, R., Musta\c{t}\v{a}, M., Nakamaye, M., Popa, M.:
   Asymptotic invariants of base loci.
   Ann. Inst. Fourier 56, No. 6, 1701-1734 (2006)

\bibitem[LB]{LB}
   Lange, H., Birkenhake, Ch.:
   Complex Abelian Varieties.
   Grundlehren der math. Wiss. 302, Springer-Verlag, 1992.

\bibitem[McM]{McMullen:dynamics}
   McMullen, C.:
   Dynamics on K3 surfaces: Salem numbers and Siegel disks.
   J. reine angew. Math 545, 201--233 (2001)

\bibitem[Res]{Reschke:Salem}
   Reschke, P.:
   Salem Numbers and Automorphisms of Complex Surfaces.
   Math. Res. Lett. 19(2), 475--482 (2012)

\bibitem[Rin]{Ringler}
   Ringler, A.:
   Fixed points of smooth varieties with Kodaira dimension zero.
   arXiv:0708.3587 [math.AG]

\bibitem[Rup]{Ruppert:abelian}
   Ruppert, W.:
   Two-dimensional complex tori with multiplication by $\sqrt d$.
   Arch. Math. 72, 278--281 (1999)

\bibitem[SS]{Shub-Sullivan}
   Shub, M., Sullivan, D.:
   A remark on the Lefschetz fixed point formula for differentiable maps.
   Topology 13, 189--191 (1974)

\end{thebibliography}
\end{document}